\documentclass[a4paper,reqno]{amsart}
\usepackage{geometry}
\usepackage[T1]{fontenc}
\usepackage[utf8]{inputenc}
\usepackage{lmodern}
\usepackage{amsmath,amssymb,amsthm,mathtools}
\usepackage{microtype}
\usepackage{enumitem}
\usepackage{xcolor}
\usepackage[colorlinks]{hyperref}
\usepackage[nameinlink,capitalize,noabbrev]{cleveref}
\hypersetup{
  pdftitle={Igusa--Todorov Algebras and the Auslander--Reiten Conjecture},
  pdfauthor={Xiaojin Zhang and Panyue Zhou},
  linkcolor=blue,
  citecolor=blue,
  urlcolor=blue
}
\numberwithin{equation}{section}
\setlist[enumerate]{label=\textup{(\arabic*)},itemsep=3pt,topsep=5pt}

\newtheorem{thm}{Theorem}[section]
\newtheorem{cor}[thm]{Corollary}
\newtheorem{lem}[thm]{Lemma}
\newtheorem{prop}[thm]{Proposition}
\theoremstyle{definition}
\newtheorem{defn}[thm]{Definition}
\newtheorem{rem}[thm]{Remark}
\crefname{thm}{Theorem}{Theorems}
\crefname{cor}{Corollary}{Corollaries}
\crefname{lem}{Lemma}{Lemmas}
\crefname{prop}{Proposition}{Propositions}
\crefname{defn}{Definition}{Definitions}
\crefname{rem}{Remark}{Remarks}
\Crefname{thm}{Theorem}{Theorems}
\Crefname{cor}{Corollary}{Corollaries}
\Crefname{lem}{Lemma}{Lemmas}
\Crefname{prop}{Proposition}{Propositions}
\Crefname{defn}{Definition}{Definitions}
\Crefname{rem}{Remark}{Remarks}

\DeclareMathOperator{\Ext}{Ext}
\DeclareMathOperator{\Hom}{Hom}
\DeclareMathOperator{\End}{End}
\DeclareMathOperator{\add}{add}
\DeclareMathOperator{\rad}{rad}
\DeclareMathOperator{\pd}{pd}
\DeclareMathOperator{\gldim}{gl.dim}
\DeclareMathOperator{\repdim}{rep.dim}
\DeclareMathOperator{\rank}{rank}
\DeclareMathOperator{\im}{Im}
\newcommand{\modA}{A\text{-}\mathrm{mod}}
\newcommand{\stHom}{\underline{\Hom}}
\newcommand{\perpA}{{}^{\perp}A}
\newcommand{\len}{\ell_R}
\usepackage{setspace}
\begin{document}
\title[Igusa--Todorov algebras and the Auslander--Reiten conjecture]
{Igusa--Todorov Algebras and the Auslander--Reiten Conjecture}
\author[Xiaojin Zhang and Panyue Zhou]{Xiaojin Zhang and Panyue Zhou}

\makeatletter
\@namedef{subjclassname@2020}{\textup{2020} Mathematics Subject Classification}
\makeatother
\date{}
\subjclass[2020]{Primary 16E05; Secondary 16E10; 16G10}

\keywords{Auslander--Reiten conjecture; Igusa--Todorov algebra; self-orthogonal module; syzygy; representation dimension}

\begin{abstract}
We prove that every Igusa--Todorov Artin algebra satisfies the Auslander--Reiten conjecture. More precisely, let $V$ be an $n$-Igusa--Todorov witness, and let $t$ be the number of isomorphism classes of nonprojective indecomposable summands of $V$. If a finitely generated module $M$ satisfies $\Ext_A^i(M,A)=0$ for every $i>0$ and $\Ext_A^q(M,M)=0$ for $1\leq q\leq 2t+1$, then $M$ is projective. As applications, algebras of representation dimension at most three and algebras satisfying $J^{2m+1}=0$ for which $A/J^m$ has finite representation type satisfy the Auslander--Reiten conjecture.
\end{abstract}

\maketitle

\section{Introduction}

Let $A$ be an Artin algebra over a commutative Artinian ring $R$. We write $\modA$ for the category of finitely generated left $A$-modules, and all modules considered below belong to this category. For a module $V$, let $\add V$ denote the full subcategory of direct summands of finite direct sums of copies of $V$. The Auslander--Reiten conjecture (ARC), formulated by Auslander and Reiten \cite{AR1975}, asserts that
\begin{equation}\label{eq:ARC}
\Ext_A^i(M,M\oplus A)=0\hspace{2mm}\text{for every }i>0
\quad\Longrightarrow\quad M\text{ is projective}.
\end{equation}
A module $M$ is \emph{self-orthogonal} if $\Ext_A^i(M,M)=0$ for every $i>0$. It is a \emph{generator} if $A\in\add M$. Thus \eqref{eq:ARC} is equivalent to the statement that every self-orthogonal generator is projective. The generalized Nakayama conjecture asserts that every indecomposable injective left $A$-module occurs as a direct summand of some term in a minimal injective resolution of the left regular module ${}_AA$. Auslander and Reiten \cite{AR1975} proved that ARC holds for all Artin algebras if and only if the generalized Nakayama conjecture does. The finitistic dimension conjecture, formulated by Bass \cite{Bass1960}, states that the supremum of the finite projective dimensions of modules in $\modA$ is finite. Chen, Hu, Qin and Wang \cite[Introduction]{CHQW2023} recall that its validity for all Artin algebras would imply ARC for all Artin algebras. They also explain why this global implication does not give the corresponding implication for a fixed algebra. The generalized Nakayama conjecture, the finitistic dimension conjecture and ARC remain open in general. Recent developments concerning ARC include the work of Chen and Xi \cite{ChenXi2025} and Zhang and Zhou \cite{ZZ2026}.

Igusa and Todorov \cite{IT2005} introduced their functions to study the little finitistic dimension. Wei \cite[Theorem~2.3]{Wei2009} subsequently introduced Igusa--Todorov algebras and proved that they have finite finitistic dimension. Their defining property is the existence of short presentations of syzygies by modules from one fixed additive subcategory. This class contains syzygy-finite algebras, as discussed by Wei \cite{Wei2009} and Wei \cite[Section~5]{Wei2011}. Wei \cite[Lemma~2.1]{Wei2008} also showed that an algebra of representation dimension at most three is $0$-Igusa--Todorov. In Proposition~\ref{prop:radical-IT}, we construct a $1$-Igusa--Todorov witness under a condition on radical powers. In particular, this construction covers algebras with radical cube zero.

Further developments include the broader class of Lat-Igusa--Todorov algebras introduced by Bravo, Lanzilotta, Mendoza and Vivero \cite{BLMV2021}. Barrios, Lanzilotta and Mata \cite{BLM2024} survey the corresponding homological functions and related classes. These results do not deduce ARC from the Igusa--Todorov property alone. In particular, finite finitistic dimension for one algebra does not imply ARC for that algebra by any known general result. To the best of our knowledge, Theorem~I is the first result that proves ARC for every Igusa--Todorov Artin algebra. It simultaneously covers syzygy-finite algebras and algebras of representation dimension at most three. Our argument uses the witness presentations themselves and gives a finite bound for the degree of a nonzero self-extension. Remark~\ref{rem:lat-IT} explains why the proof does not extend directly to Lat-Igusa--Todorov algebras.

The \emph{nonprojective support} of a module $V$ is the set of isomorphism classes of its nonprojective indecomposable direct summands. We write $t(V)$ for the cardinality of this set. Thus multiplicities do not enter $t(V)$. We also write
\[
\perpA=\{X\in\modA\mid\Ext_A^i(X,A)=0\text{ for every }i>0\}.
\]
Recall that $A$ is $n$-Igusa--Todorov if there is a fixed module $V$ such that each $\Omega^nY$ admits an exact sequence $0\to V_1\to V_0\to\Omega^nY\to0$ with $V_0,V_1\in\add V$, see Definition~\ref{def:IT}. The module $V$ is called a witness. Our first result is a finite test for projectivity.

\vskip 5pt
\noindent\textbf{Theorem I}.\;{\rm (see Theorem~\ref{thm:finite-test} for details)} \emph{Let $A$ be an $n$-Igusa--Todorov Artin algebra with witness $V$, and put $t=t(V)$. If $M\in\perpA$ and}
\[
\Ext_A^q(M,M)=0\quad(1\leq q\leq2t+1),
\]
\emph{then $M$ is projective. Consequently, every Igusa--Todorov Artin algebra satisfies the Auslander--Reiten conjecture.}
\vskip 5pt

The bound depends only on the nonprojective support of the chosen witness and not on $n$. No optimality is asserted. The hypothesis $M\in\perpA$ still requires vanishing in every positive degree. The finite test concerns self-extensions only.

In Proposition~\ref{prop:segment}, we prove the underlying count for a finite segment of syzygies. We pull back each projective cover extension along a presentation in $\add V$. Vanishing pullbacks force the next syzygies into $\add V$. Orthogonality in stable Hom then makes their nonprojective supports disjoint. Nonvanishing pullbacks give a triangular matrix of Ext lengths with positive diagonal. Its factorization through $\mathbb Z^t$ bounds its rank by $t$. Each of the two classes therefore contains at most $t$ indices, contradicting the existence of $2t+1$ such extensions. The matrix rank argument is related to that of Zhang and Zhou \cite[Lemma~3.8]{ZZ2026}. The entries used here are differences of Ext lengths attached to witness presentations.

Wei \cite[Lemma~2.1]{Wei2008} proved that an algebra of representation dimension at most three is $0$-Igusa--Todorov. Theorem~I supplies the new step from this presentation property to ARC and gives the following consequence.

\vskip 5pt
\noindent\textbf{Theorem II}.\; {\rm  (see Proposition~\ref{prop:repdim} for details)}~ \emph{Every Artin algebra of representation dimension at most three satisfies the Auslander--Reiten conjecture.}
\vskip 5pt

The second application concerns a condition on powers of the radical. We construct an explicit $1$-Igusa--Todorov witness under this condition.

\vskip 5pt
\noindent\textbf{Theorem III}.\; {\rm (see Proposition~\ref{prop:radical-IT} and Corollary~\ref{cor:radical})}~ \emph{Let $A$ be an Artin algebra with radical $J$. If $J^{2m+1}=0$ and $A/J^m$ has finite representation type for some $m\geq1$, then $A$ satisfies the Auslander--Reiten conjecture. In particular, every Artin algebra with radical cube zero satisfies the conjecture.}
\vskip 5pt

The same condition on radical powers appears in the work of Dr\"axler and Happel \cite{DH1992} on the generalized Nakayama conjecture. Here we obtain ARC for arbitrary Artin algebras satisfying that condition. For split algebras of finite dimension with radical cube zero, Zhang and Zhou \cite[Theorem~1.1]{ZZ2026} obtain a bound of $3s+1$, where $s$ is the number of simple modules. That bound uses a different parameter from $2t(V)+1$ and is not uniformly smaller for every choice of witness.

The paper is structured as follows. In Section~\ref{sec:preliminaries}, we recall the definition of Igusa--Todorov algebras and establish the syzygy identities and a lifting result used in the proof. In Section~\ref{sec:main}, we prove the estimate for a finite segment, deduce Theorem~I and give a version restricted to self-orthogonal modules. In Section~\ref{sec:applications}, we prove Theorems~II and III.

\section{Preliminaries}\label{sec:preliminaries}

For a module $X$, choose a projective cover $P_X\twoheadrightarrow X$ and define $\Omega X$ by the exact sequence $0\to\Omega X\to P_X\to X\to0$. Higher syzygies are defined inductively, with $\Omega^0X=X$. We write $\stHom_A(X,Y)$ for $\Hom_A(X,Y)$ modulo morphisms that factor through projective modules. If $E$ is an $R$-module of finite length, then $\len E$ denotes its composition length. In particular, all Ext groups appearing below have finite $R$-length. A module is called \emph{basic} if its indecomposable direct summands are pairwise nonisomorphic.

We use the syzygy formulation of the definition given by Wei \cite{Wei2009}, as recalled by Wei \cite[Section~5]{Wei2011}.

\begin{defn}\label{def:IT}
Let $n\geq0$ be an integer. The algebra $A$ is \emph{$n$-Igusa--Todorov} if there is a module $V$ such that every module $Y$ admits an exact sequence
\[
0\longrightarrow V_1\longrightarrow V_0\longrightarrow\Omega^nY\longrightarrow0,
\quad V_0,V_1\in\add V.
\]
We call $V$ an \emph{$n$-Igusa--Todorov witness}. An algebra is \emph{Igusa--Todorov} if it is $n$-Igusa--Todorov for some $n$.
\end{defn}

Adjoining $A$ to a witness preserves the Igusa--Todorov property and does not change $t(V)$.

\begin{lem}\label{lem:syzygy}
Let $X\in\perpA$.
\begin{enumerate}
\item $\Omega^aX\in\perpA$ for every $a\geq0$.
\item For $a\geq0$ and $q\geq1$,
\[
\Ext_A^q(\Omega^aX,\Omega^aX)\simeq\Ext_A^q(X,X).
\]
\item For $a\geq1$ and every module $Y$,
\[
\stHom_A(\Omega^aX,Y)\simeq\Ext_A^a(X,Y).
\]
\item If $\pd_A X<\infty$, then $X$ is projective.
\end{enumerate}
\end{lem}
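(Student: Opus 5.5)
We prove the four parts in order, each resting on the long exact sequences attached to $0\to\Omega X\to P_X\to X\to0$ and the vanishing $\Ext_A^i(X,A)=0$ for $i>0$. Since every finitely generated projective module lies in $\add A$, this hypothesis gives $\Ext_A^i(X,P)=0$ for all projective $P$ and all $i>0$.

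For (1), it suffices to treat $a=1$ and then induct. Apply $\Hom_A(-,A)$ to the projective cover sequence. For $i\geq1$ the long exact sequence gives $\Ext_A^i(\Omega X,A)\simeq\Ext_A^{i+1}(X,A)=0$, using $\Ext_A^{i}(P_X,A)=0=\Ext_A^{i+1}(P_X,A)$. So $\Omega X\in\perpA$.

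For (2), again take $a=1$ and induct, since by (1) each $\Omega^aX$ lies in $\perpA$. We compare both sides with $\Ext_A^{q+1}(X,\Omega X)$.
\begin{itemize}
\item Applying $\Hom_A(-,\Omega X)$ to the sequence for $X$ gives, for $q\geq1$, $\Ext_A^q(\Omega X,\Omega X)\simeq\Ext_A^{q+1}(X,\Omega X)$, because $P_X$ is projective.
\item Applying $\Hom_A(X,-)$ to the same sequence gives the segment $\Ext_A^q(X,P_X)\to\Ext_A^q(X,X)\to\Ext_A^{q+1}(X,\Omega X)\to\Ext_A^{q+1}(X,P_X)$. Its outer terms vanish for $q\geq1$ since $X\in\perpA$, so $\Ext_A^q(X,X)\simeq\Ext_A^{q+1}(X,\Omega X)$.
\end{itemize}
Combining the two isomorphisms gives the claim for $a=1$.

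For (3), first handle $a=1$ with $X\in\perpA$ arbitrary. Applying $\Hom_A(-,Y)$ gives the exact sequence
\[
\Hom_A(P_X,Y)\to\Hom_A(\Omega X,Y)\to\Ext_A^1(X,Y)\to0,
\]
so $\Ext_A^1(X,Y)$ is $\Hom_A(\Omega X,Y)$ modulo the maps that extend through the inclusion $\Omega X\to P_X$. We then show these are exactly the maps factoring through a projective. Suppose $f\colon\Omega X\to Q$ with $Q$ projective. Since $\Ext_A^1(X,Q)=0$, the map $f$ extends to $P_X$. Hence any map $\Omega X\to Q\to Y$ extends through $P_X$. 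The converse is immediate, as $P_X$ is projective. This gives $\stHom_A(\Omega X,Y)\simeq\Ext_A^1(X,Y)$.

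For general $a\geq1$, apply the case $a=1$ to $\Omega^{a-1}X$, which lies in $\perpA$ by (1). This gives $\stHom_A(\Omega^aX,Y)\simeq\Ext_A^1(\Omega^{a-1}X,Y)$. By dimension shifting, $\Ext_A^1(\Omega^{a-1}X,Y)\simeq\Ext_A^a(X,Y)$, and this shift holds for any module $X$.

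For (4), let $d=\pd_AX$ and suppose $d\geq1$. Then $\Ext_A^d(X,-)$ is nonzero on some module, and it is right exact because $\Ext_A^{d+1}(X,-)=0$. Take an epimorphism $A^k\twoheadrightarrow Z$ onto a module $Z$ with $\Ext_A^d(X,Z)\neq0$. Right exactness gives a surjection $\Ext_A^d(X,A^k)\twoheadrightarrow\Ext_A^d(X,Z)$. But $\Ext_A^d(X,A^k)=0$ because $X\in\perpA$, which is a contradiction. Hence $d=0$ and $X$ is projective.

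The only step needing care is the identification of the two subgroups in (3): the maps factoring through any projective must coincide with the maps factoring through the inclusion $\Omega X\to P_X$. This is exactly where the hypothesis $X\in\perpA$ enters, through $\Ext_A^1(X,Q)=0$ for projective $Q$. The remaining parts are routine long exact sequence arguments.
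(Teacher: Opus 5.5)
Your proof is correct and follows the paper's for (1)--(3). In (2) you use the same two-variable dimension shift through $\Ext_A^{q+1}(X,\Omega X)$. In (3) you use the same identification of maps extending to the projective cover with maps factoring through a projective, via $\Ext_A^1(-,Q)=0$ for projective $Q$. The only variation is in (4): you argue via right exactness of the top functor $\Ext_A^d(X,-)$ and its vanishing on $A^k$, whereas the paper applies the vanishing directly to the projective module $\Omega^dX$ to split the cover sequence of $\Omega^{d-1}X$. These are two standard forms of the same idea.
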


\begin{proof}
The first assertion follows by dimension shifting. Let
\[
0\longrightarrow\Omega X\longrightarrow P\longrightarrow X\longrightarrow0
\]
be a projective cover sequence. Every finitely generated projective module belongs to $\add A$. Hence $X\in\perpA$ gives $\Ext_A^{>0}(X,P)=0$. Dimension shifting in the two variables now gives, for $q\geq1$,
\[
\Ext_A^q(\Omega X,\Omega X)
\simeq\Ext_A^{q+1}(X,\Omega X)
\simeq\Ext_A^q(X,X).
\]
Iteration proves (2).

For (3), the long exact sequence associated to the projective cover of $\Omega^{a-1}X$ identifies $\Ext_A^1(\Omega^{a-1}X,Y)$ with the quotient of $\Hom_A(\Omega^aX,Y)$ by the morphisms that extend to that projective cover. These morphisms factor through a projective module. Conversely, if a morphism factors as $\Omega^aX\to Q\to Y$ with $Q$ projective, its first factor extends to the cover because
\[
\Ext_A^1(\Omega^{a-1}X,Q)\simeq\Ext_A^a(X,Q)=0.
\]
The quotient is therefore stable Hom, and dimension shifting proves (3).

For (4), suppose that $\pd_A X=d>0$. Then $\Omega^dX$ is projective and
\[
\Ext_A^1(\Omega^{d-1}X,\Omega^dX)
\simeq\Ext_A^d(X,\Omega^dX)=0.
\]
The projective cover sequence of $\Omega^{d-1}X$ splits, making $\Omega^{d-1}X$ projective, contrary to the choice of $d$.
\end{proof}

\begin{lem}\label{lem:no-proj-summand}
If $X\in\perpA$, then $\Omega X$ has no nonzero projective direct summand.
\end{lem}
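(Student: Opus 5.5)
Suppose $\Omega X \simeq Q \oplus Z$ with $Q$ a nonzero projective module. We argue by contradiction, exploiting that $\Omega X$ sits inside a projective cover $P_X$ and that $\Ext^1_A(X,-)$ vanishes on projectives.

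First, the projection $\pi\colon \Omega X\to Q$ extends to $P_X$. Indeed, applying $\Hom_A(-,Q)$ to
\[
0\to\Omega X\xrightarrow{\iota} P_X\to X\to 0
\]
gives the exact sequence
\[
\Hom_A(P_X,Q)\to\Hom_A(\Omega X,Q)\to\Ext_A^1(X,Q).
\]
Since $X\in\perpA$ and $Q\in\add A$, we have $\Ext_A^1(X,Q)=0$. So there is a map $g\colon P_X\to Q$ with $g\iota=\pi$.

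Next, let $s\colon Q\to\Omega X$ be the split inclusion, so $\pi s=\mathrm{id}_Q$. Then $g(\iota s)=\mathrm{id}_Q$, so $\iota s\colon Q\to P_X$ is a split monomorphism. Hence $\iota(sQ)$ is a nonzero direct summand of $P_X$ contained in $\iota(\Omega X)=\ker(P_X\to X)$.

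Finally, this contradicts minimality of the projective cover. Since $P_X\to X$ is a projective cover, its kernel lies in $\rad P_X$; equivalently, the kernel is superfluous. A nonzero direct summand $D$ of $P_X$ cannot lie in $\rad P_X$: writing $P_X=D\oplus D'$ gives $\rad P_X=\rad D\oplus\rad D'$, and $D\neq\rad D$ by Nakayama's lemma. So $Q=0$.

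The only delicate point is the last one, identifying minimality of the cover with the kernel lying in the radical. For Artin algebras this is standard, because projective covers exist and are characterized by having superfluous kernel. No step is a serious obstacle; the key input is the vanishing of $\Ext^1_A(X,A)$, which lets the projection onto $Q$ extend.
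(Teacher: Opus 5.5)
Your proposal is correct and follows essentially the same argument as the paper. Both proofs extend the projection $\Omega X\to Q$ over the projective cover using $\Ext^1_A(X,Q)=0$, conclude that $Q$ is a direct summand of $P_X$ lying inside $\rad P_X$, and then obtain $Q=0$ from Nakayama's lemma.
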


\begin{proof}
Write $0\to\Omega X\xrightarrow{u}P\to X\to0$ for its projective cover sequence. If a projective module $Q$ is a direct summand of $\Omega X$, the projection $\pi:\Omega X\to Q$ extends to $\widetilde\pi:P\to Q$, because $\Ext_A^1(X,Q)=0$. The composite of the inclusion $Q\to\Omega X\xrightarrow{u}P$ with $\widetilde\pi$ is $1_Q$. Thus the image of $Q$ is a direct summand of $P$ contained in $\rad P$. If $P=Q\oplus P'$ with this copy of $Q$, then $\rad P=\rad Q\oplus\rad P'$, and hence $Q=\rad Q$. Nakayama's lemma gives $Q=0$.
\end{proof}

\begin{samepage}
\begin{lem}\label{lem:finite-range}
Let $M\in\perpA$, let $N\geq1$ be an integer, and suppose that $\Ext_A^q(M,M)=0$ for $1\leq q\leq N$. Fix an integer $d\geq1$ and set $X_i=\Omega^{d+i}M$ for $i\geq0$. Then
\begin{enumerate}
\item $\Ext_A^q(X_i,X_i)=0$ for $1\leq q\leq N$;
\item $\stHom_A(X_a,X_b)=0$ if $1\leq a-b\leq N$;
\item $\Ext_A^q(X_a,X_b)=0$ if $a\geq b$, $q\geq1$, and $q+a-b\leq N$.
\end{enumerate}
\end{lem}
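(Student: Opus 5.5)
The whole proof reduces to \cref{lem:syzygy}. Since $M\in\perpA$, part (1) of \cref{lem:syzygy} gives $X_i=\Omega^{d+i}M\in\perpA$ for all $i$. Part (2), applied with $a=d+i$ and $X=M$, gives $\Ext_A^q(X_i,X_i)\simeq\Ext_A^q(M,M)$ for every $q\geq1$. Hence (1) follows immediately from the hypothesis.

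I will prove (3) first and then deduce (2). Fix $a\geq b$ and $q\geq1$ with $q+a-b\leq N$, and write $k=a-b\geq0$. Then $X_a=\Omega^kX_b$ with $X_b\in\perpA$. The case $k=0$ is (1), so assume $k\geq1$. Dimension shifting in the first variable along the projective cover sequences of $X_b,\Omega X_b,\dots,\Omega^{k-1}X_b$ gives
\[
\Ext_A^q(\Omega^kX_b,X_b)\simeq\Ext_A^{q+k}(X_b,X_b)\quad(q\geq1).
\]
This shift is valid for any second argument, because $\Ext^{\geq1}$ into a projective middle term vanishes on the relevant side. More precisely, the long exact sequence for $\Ext_A^{\ast}(-,X_b)$ applied to $0\to\Omega^{j+1}X_b\to P\to\Omega^jX_b\to0$ gives
\[
\Ext_A^q(\Omega^{j+1}X_b,X_b)\simeq\Ext_A^{q+1}(\Omega^jX_b,X_b)
\]
for $q\geq1$, since $\Ext_A^{q}(P,X_b)=0=\Ext_A^{q+1}(P,X_b)$. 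Finally, since $1\leq q+k\leq N$, part (1) shows that $\Ext_A^{q+k}(X_b,X_b)=0$. This proves (3).

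For (2), let $1\leq a-b\leq N$, and write $X_a=\Omega^{a-b}X_b$ with $X_b\in\perpA$. By \cref{lem:syzygy}(3), applied to $X=X_b$ and $Y=X_b$ with exponent $a-b\geq1$, we get $\stHom_A(X_a,X_b)\simeq\Ext_A^{a-b}(X_b,X_b)$. This vanishes by (1), since $1\leq a-b\leq N$.

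There is no real obstacle. The only point requiring care is that \cref{lem:syzygy}(3) needs its argument $X$ to lie in $\perpA$, which holds for $X_b$ by \cref{lem:syzygy}(1). The assumption $d\geq1$ is not needed for these arguments; it is presumably imposed for later use, for instance so that \cref{lem:no-proj-summand} applies to each $X_i$.
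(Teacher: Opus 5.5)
Your proof is correct and follows essentially the same route as the paper: (1) comes from \cref{lem:syzygy}(2), (2) from \cref{lem:syzygy}(3) applied to $X_b\in\perpA$, and (3) from dimension shifting to $\Ext_A^{q+a-b}(X_b,X_b)$. Your remark that $d\geq1$ is not needed for this lemma is also accurate; it is only used later, for \cref{lem:no-proj-summand}.
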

\end{samepage}

\begin{proof}
Part (1) is \cref{lem:syzygy}(2). For $a>b$, part (3) of that lemma gives
\[
\stHom_A(X_a,X_b)\simeq\Ext_A^{a-b}(X_b,X_b).
\]
For $a\geq b$ and $q\geq1$, dimension shifting gives
\[
\Ext_A^q(X_a,X_b)\simeq\Ext_A^{q+a-b}(X_b,X_b).
\]
These identities prove the remaining assertions.
\end{proof}

\begin{lem}\label{lem:pullback}
Consider exact sequences
\[
0\longrightarrow K\xrightarrow{v}L\xrightarrow{p}X\longrightarrow0,
\quad
\varepsilon\colon~~~0\longrightarrow Y\xrightarrow{u}P\xrightarrow{\pi}X\longrightarrow0,
\]
where $P$ is projective. If the image of $[\varepsilon]$ in $\Ext_A^1(L,Y)$ under pullback along $p$ is zero, then
\[
L\oplus Y\simeq K\oplus P.
\]
In particular, if $K,L\in\add V$ and $A\in\add V$, then $Y\in\add V$.
\end{lem}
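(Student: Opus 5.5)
The plan is to build the pullback explicitly and let the vanishing hypothesis split it. Form the pullback $E$ of $\pi\colon P\to X$ along $p\colon L\to X$. This gives a commutative diagram with exact rows and columns. One row is
\[
0\longrightarrow Y\longrightarrow E\longrightarrow L\longrightarrow0,
\]
and its class is $p^*[\varepsilon]\in\Ext_A^1(L,Y)$. The other is
\[
0\longrightarrow K\longrightarrow E\longrightarrow P\longrightarrow0,
\]
obtained because $\ker(E\to P)\simeq\ker p=K$. The latter is the standard second exact sequence attached to a pullback of two epimorphisms. I will verify it directly from the construction $E=\{(l,x)\in L\oplus P\mid p(l)=\pi(x)\}$. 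Surjectivity of $E\to P$ uses the surjectivity of $p$, and the kernel is $\{(l,0)\mid p(l)=0\}\simeq K$.

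Next, both sequences split. The second splits because $P$ is projective, so $E\simeq K\oplus P$. The first splits by hypothesis, since its class $p^*[\varepsilon]$ is zero, so $E\simeq L\oplus Y$. Combining the two gives $L\oplus Y\simeq K\oplus P$.

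For the final clause, assume $K,L\in\add V$ and $A\in\add V$. Then $P\in\add A\subseteq\add V$, so $K\oplus P\in\add V$. Hence $Y$ is a direct summand of a module in $\add V$, and therefore $Y\in\add V$; this uses the Krull--Schmidt property for finitely generated modules over an Artin algebra.

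The only point needing any care is the identification of the pullback row's class with $p^*[\varepsilon]$. This is the definition of the pullback action on $\Ext^1$ via Yoneda extensions, so no real obstacle is expected.
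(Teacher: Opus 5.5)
Your proof is correct and is essentially the paper's argument: the paper uses a lift $h\colon L\to P$ of $p$ to write down the split sequence $0\to K\to L\oplus Y\to P\to 0$ explicitly, and this is your second pullback sequence transported along the splitting $E\simeq L\oplus Y$. One small remark: the final clause does not need Krull--Schmidt, because $\add V$ is closed under direct summands by definition, so $Y$ is a summand of $K\oplus P$ and hence of some $V^r$.
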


\begin{proof}
The pullback splits if and only if $p$ lifts to a morphism $h:L\to P$ with $\pi h=p$. There is then a morphism $g:K\to Y$ such that $ug=hv$. The sequence
\[
0\longrightarrow K\xrightarrow{(v,-g)}L\oplus Y
\xrightarrow{(h,u)}P\longrightarrow0
\]
is exact. Surjectivity follows by first lifting the image in $X$ through $p$. The description of the kernel follows from $\ker p=\im v$ and $\ker\pi=\im u$. It splits because $P$ is projective. The last assertion follows since $\add V$ is closed under direct summands.
\end{proof}

\section{The finite self-extension test}\label{sec:main}

The following estimate contains the counting argument. It requires presentations only for the specified syzygies.

\begin{prop}[A bound for a finite segment of syzygies]\label{prop:segment}
Let $M\in\perpA$ be nonprojective, let $V$ be a module, and let $d,N\geq1$ be integers. Suppose that
\[
\Ext_A^q(M,M)=0\quad(1\leq q\leq N).
\]
Set $X_i=\Omega^{d+i}M$ for $0\leq i\leq N$. If, for every $0\leq i<N$, there is an exact sequence
\begin{equation}\label{eq:presentation}
0\longrightarrow K_i\longrightarrow L_i\xrightarrow{p_i}X_i\longrightarrow0,
\quad K_i,L_i\in\add V,
\end{equation}
then $N\leq2t(V)$.
\end{prop}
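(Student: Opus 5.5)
The plan is to attach to each index $0\le i<N$ the projective cover extension of $X_i$ and to pull it back along $p_i$. Let $\varepsilon_i\colon 0\to X_{i+1}\to P_i\to X_i\to 0$ be the projective cover sequence, and let $\eta_i\in\Ext_A^1(L_i,X_{i+1})$ be the image of $[\varepsilon_i]$ under pullback along $p_i$. The index set $\{0,\dots,N-1\}$ then splits as $S_0\sqcup S_1$, where $S_0=\{i\mid\eta_i=0\}$ and $S_1=\{i\mid\eta_i\neq0\}$. I will show $|S_0|\le t$ and $|S_1|\le t$, with $t=t(V)$; together these give $N\le 2t$. I will use throughout that each $X_i$ is nonprojective. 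Indeed, otherwise $\pd_A M<\infty$, and \cref{lem:syzygy}(4) would make $M$ projective.

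\emph{The class $S_0$.} For $i\in S_0$, \cref{lem:pullback} gives $L_i\oplus X_{i+1}\simeq K_i\oplus P_i$. By Krull--Schmidt, every nonprojective indecomposable summand of $X_{i+1}$ is then a summand of $K_i$, hence of $V$. By \cref{lem:no-proj-summand}, $X_{i+1}$ has no projective summands, and it is nonzero and nonprojective. So $X_{i+1}$ contributes a nonempty subset $\Sigma_i$ of the nonprojective support of $V$.

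Now take $i<j$ in $S_0$. Then $1\le (j+1)-(i+1)\le N$, so \cref{lem:finite-range}(2) gives $\stHom_A(X_{j+1},X_{i+1})=0$. Suppose a nonprojective indecomposable $Z$ were a summand of both modules. The composite $X_{j+1}\to Z\to X_{i+1}$ of the projection and the inclusion would vanish in stable Hom. Composing with the splitting maps would then give $1_Z=0$ in $\stHom_A(Z,Z)$, which is impossible for nonprojective indecomposable $Z$. So the sets $\Sigma_i$ are pairwise disjoint and nonempty, and $|S_0|\le t$.

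\emph{The class $S_1$.} Here I set up a matrix rank argument. For $i,j\in S_1$ put
\[
D_{ij}=\len\Ext_A^1(L_i,X_{j+1})-\len\Ext_A^1(K_i,X_{j+1}).
\]
Since $\Ext^1$ vanishes on projective first arguments, $D_{ij}=\sum_Z\bigl(m_Z(L_i)-m_Z(K_i)\bigr)\len\Ext_A^1(Z,X_{j+1})$. The sum runs over the $t$ nonprojective indecomposable summands $Z$ of $V$, and $m_Z(-)$ denotes multiplicity. Hence $D$ factors through $\mathbb Z^t$ and $\rank D\le t$.

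To evaluate $D_{ij}$, I apply $\Hom_A(-,Y)$ with $Y=X_{j+1}$ to \eqref{eq:presentation}. This gives the exact sequence
\[
\Ext_A^1(X_i,Y)\to\Ext_A^1(L_i,Y)\to\Ext_A^1(K_i,Y)\to\Ext_A^2(X_i,Y).
\]
The key point is that dimension shifting in the first variable is valid for arbitrary $Y$: $\Ext_A^2(X_i,Y)\simeq\Ext_A^1(X_{i+1},Y)$. Combined with \cref{lem:finite-range}(3), this yields two facts.
\begin{itemize}
\item For $j<i$, both $\Ext_A^1(X_i,X_{j+1})\simeq\Ext_A^{i-j}(X_{j+1},X_{j+1})$ and $\Ext_A^2(X_i,X_{j+1})\simeq\Ext_A^{1+i-j}(X_{j+1},X_{j+1})$ vanish, because $1+i-j\le N$. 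The middle map is therefore an isomorphism and $D_{ij}=0$.
\item For $j=i$, $\Ext_A^2(X_i,X_{i+1})\simeq\Ext_A^1(X_{i+1},X_{i+1})=0$. So $D_{ii}$ equals the length of the image of $\Ext_A^1(X_i,X_{i+1})\to\Ext_A^1(L_i,X_{i+1})$. That image contains $\eta_i\neq0$, so $D_{ii}>0$.
\end{itemize}
Thus $D$, indexed by $S_1$ in increasing order, is triangular with positive diagonal. It therefore has rank $|S_1|$, and $|S_1|\le t$.

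I expect the main obstacle to be the choice of the matrix entries in the $S_1$ step. They must factor through $\mathbb Z^t$, which forces them to be linear in the $V$-summands of $K_i$ and $L_i$. They must also be triangular with positive diagonal, which requires the vanishing of both $\Ext^1(X_i,X_{j+1})$ and $\Ext^2(X_i,X_{j+1})$ in exactly the right index range. The bookkeeping that keeps all the exponents at most $N$ is where the argument is tightest. The disjointness argument for $S_0$ also needs some care, since it relies on indecomposable nonprojective summands having nonzero identity in stable Hom.
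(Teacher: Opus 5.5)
Your proposal is correct and follows the paper's proof essentially step for step. The paper uses the same split of indices by whether the pulled-back projective cover extension vanishes, the same disjoint nonprojective supports via vanishing stable Hom, and the same upper-triangular matrix of differences of $\Ext^1$-lengths factoring through $\mathbb Z^t$.

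The differences are only cosmetic. You apply Krull--Schmidt directly to $L_i\oplus X_{i+1}\simeq K_i\oplus P_i$, whereas the paper first replaces $V$ by $V\oplus A$. You also obtain $\Ext_A^2(X_i,X_{i+1})=0$ by shifting in the first variable, whereas the paper shifts in the second.
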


\begin{proof}
Replace $V$ by $V\oplus A$ and then by a basic module with the same additive closure. Write $t=t(V)$. Adding $A$ introduces only projective indecomposable summands, so this replacement preserves $t$. It also preserves the assumed presentations. Since $M$ is nonprojective, \cref{lem:syzygy}(4) gives $\pd_A M=\infty$. Every $X_i$ is consequently nonzero. Moreover, $d\geq1$ ensures that each $X_i$ is a positive syzygy of $M$. It follows from \cref{lem:syzygy,lem:no-proj-summand} that $X_i\in\perpA$ and that $X_i$ has no nonzero projective direct summand. In particular, none of the $X_i$ is projective.

For $0\leq i<N$, write the projective cover extension as
\begin{equation}\label{eq:cover}
\varepsilon_i\colon ~~~
0\longrightarrow X_{i+1}\longrightarrow P_i\longrightarrow X_i\longrightarrow0.
\end{equation}
Since $X_i$ is not projective, $[\varepsilon_i]\neq0$. Denote the pullback map by
\[
\rho_i:\Ext_A^1(X_i,X_{i+1})\longrightarrow\Ext_A^1(L_i,X_{i+1}).
\]
Partition the index set $\{0,\ldots,N-1\}$ as
\[
I=\{i\mid\rho_i([\varepsilon_i])=0\},\quad
II=\{i\mid\rho_i([\varepsilon_i])\neq0\}.
\]

\smallskip
\noindent\emph{\bf The indices in $I$.}
For $i\in I$, \cref{lem:pullback} gives
\begin{equation}\label{eq:support}
L_i\oplus X_{i+1}\simeq K_i\oplus P_i,
\quad\text{and hence }X_{i+1}\in\add V.
\end{equation}
If $i<j$ lie in $I$ and $X_{i+1}$ and $X_{j+1}$ have a common indecomposable summand $U$, the composite
\[
X_{j+1}\twoheadrightarrow U\hookrightarrow X_{i+1}
\]
has nonzero stable class. Indeed, a factorization of this composite through a projective module would, after composing with the corresponding inclusion and projection, factor $1_U$ through a projective module. This would make $U$ projective, a contradiction. But \cref{lem:finite-range}(2) gives
\[
\stHom_A(X_{j+1},X_{i+1})=0,
\quad 1\leq j-i\leq N-1.
\]
Thus the nonprojective supports of the modules $X_{i+1}$, $i\in I$, are nonempty and pairwise disjoint. By \eqref{eq:support}, they lie among the $t$ nonprojective summands of $V$. Therefore
$|I|\leq t.$

\smallskip
\noindent\emph{\bf The indices in $II$.}
Write $II=\{i_1<\cdots<i_q\}$. If $q=0$, then $|II|\leq t$ is immediate, so suppose $q>0$. Define the integer matrix $C=(c_{ab})_{1\leq a,b\leq q}$ by
\[
c_{ab}=\len\Ext_A^1(L_{i_a},X_{i_b+1})
-\len\Ext_A^1(K_{i_a},X_{i_b+1}).
\]
For later use, applying $\Hom_A(-,Y)$ to \eqref{eq:presentation} gives the exact segment
\begin{equation}\label{eq:ext-segment}
\begin{aligned}
\Ext_A^1(X_i,Y)&\longrightarrow\Ext_A^1(L_i,Y)
\longrightarrow\Ext_A^1(K_i,Y)\\
&\longrightarrow\Ext_A^2(X_i,Y).
\end{aligned}
\end{equation}

If $a>b$, then $i_a\geq i_b+1$. For $r=1,2$, we have
\[
1\leq r+i_a-(i_b+1)\leq N.
\]
Indeed, $r+i_a-(i_b+1)\leq2+(N-1)-1=N$. Dimension shifting and \cref{lem:syzygy}(2) give
\[
\Ext_A^r(X_{i_a},X_{i_b+1})
\simeq\Ext_A^{r+i_a-i_b-1}(M,M)=0
\quad(r=1,2).
\]
The middle map in \eqref{eq:ext-segment}, with $i=i_a$ and $Y=X_{i_b+1}$, is therefore an isomorphism. This proves $c_{ab}=0$.

For the diagonal, apply $\Hom_A(X_i,-)$ to \eqref{eq:cover}. Since $X_i\in\perpA$, we obtain
\[
\Ext_A^2(X_i,X_{i+1})\simeq\Ext_A^1(X_i,X_i)=0.
\]
Consequently, \eqref{eq:ext-segment} gives the short exact sequence
\[
0\longrightarrow\im\rho_i\longrightarrow\Ext_A^1(L_i,X_{i+1})
\longrightarrow\Ext_A^1(K_i,X_{i+1})\longrightarrow0.
\]
Additivity of composition length yields
\[
c_{aa}=\len(\im\rho_{i_a})>0,
\]
where strict positivity follows from $i_a\in II$. Thus $C$ is upper triangular with positive diagonal, so
\begin{equation}\label{eq:full-rank}
\rank_{\mathbb Q}C=q.
\end{equation}

Let $U_1,\cdots,U_t$ be representatives of the nonprojective indecomposable summands of $V$. By Krull--Schmidt, we may write
\[
L_i\simeq L_i^{\mathrm{pr}}\oplus\bigoplus_{\alpha=1}^tU_\alpha^{\ell_{i\alpha}},
\quad
K_i\simeq K_i^{\mathrm{pr}}\oplus\bigoplus_{\alpha=1}^tU_\alpha^{k_{i\alpha}},
\]
where $L_i^{\mathrm{pr}}$ and $K_i^{\mathrm{pr}}$ are projective. Since first Ext vanishes when its first argument is projective,
\[
c_{ab}=\sum_{\alpha=1}^t
(\ell_{i_a\alpha}-k_{i_a\alpha})\,
\len\Ext_A^1(U_\alpha,X_{i_b+1}).
\]
Hence $C=DE$, where
\[
D=(\ell_{i_a\alpha}-k_{i_a\alpha})_{a,\alpha},
\quad
E=(\len\Ext_A^1(U_\alpha,X_{i_b+1}))_{\alpha,b}
\]
have sizes $q\times t$ and $t\times q$, respectively. It follows that $\rank_{\mathbb Q}C\leq t$. Together with \eqref{eq:full-rank}, this gives $|II|=q\leq t$. The same conclusion covers $t=0$, when the factorization forces $C=0$.

Combining the two counts proves
\[
N=|I|+|II|\leq2t,
\]
as required.
\end{proof}

We now apply the estimate to an Igusa--Todorov witness.

\begin{thm}[Finite self-extension test]\label{thm:finite-test}
Let $A$ be an $n$-Igusa--Todorov Artin algebra with witness $V$, and put $t=t(V)$. If $M\in\perpA$ and
\[
\Ext_A^q(M,M)=0\quad(1\leq q\leq 2t+1),
\]
then $M$ is projective. Consequently, every Igusa--Todorov Artin algebra satisfies ARC.
\end{thm}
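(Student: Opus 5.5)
We argue by contradiction and reduce to \cref{prop:segment}. Suppose $M\in\perpA$ satisfies $\Ext_A^q(M,M)=0$ for $1\leq q\leq 2t+1$ but is not projective. Put $N=2t+1$. We must choose $d\geq1$ so that each syzygy $X_i=\Omega^{d+i}M$, $0\leq i<N$, carries a presentation of the form \eqref{eq:presentation} with $K_i,L_i\in\add V$. The natural choice is $d=\max\{n,1\}$. Then for every $i\geq0$ we have $X_i=\Omega^{n}(\Omega^{d-n+i}M)$, which is the $n$-th syzygy of the module $Y=\Omega^{d-n+i}M$. \cref{def:IT} then supplies an exact sequence $0\to V_1\to V_0\to\Omega^nY\to0$ with $V_0,V_1\in\add V$.

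One point needs checking. Syzygies are defined only up to isomorphism through the chosen projective covers, and $\Omega^n(\Omega^{k}M)\simeq\Omega^{n+k}M$ because minimal projective resolutions compose. So the presentation of $\Omega^nY$ transports along this isomorphism to a presentation of $X_i$. If $n=0$ we take $d=1$ and apply the definition to $Y=X_i$ directly. Thus all hypotheses of \cref{prop:segment} hold with $N=2t+1$, and the proposition gives $2t+1\leq 2t$, a contradiction. Hence $M$ is projective.

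For ARC, let $M$ satisfy $\Ext_A^i(M,M\oplus A)=0$ for all $i>0$. Then $M\in\perpA$, and in particular $\Ext^q_A(M,M)=0$ for $1\leq q\leq 2t+1$, so the first part applies. The witness $V$ exists because $A$ is $n$-Igusa--Todorov for some $n$.

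There is no serious obstacle, since the counting is already done in \cref{prop:segment}. The only care needed is the index bookkeeping: $d$ must be at least $1$, so that the $X_i$ are positive syzygies as that proposition requires, and at least $n$, so that each $X_i$ is an $n$-th syzygy.
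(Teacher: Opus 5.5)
Your proposal is correct and follows the paper's proof: assume $M$ is nonprojective, use the Igusa--Todorov presentations of the syzygies $X_i$ to feed \cref{prop:segment} with $N=2t+1$, and obtain the contradiction $2t+1\leq 2t$. The only difference is bookkeeping. The paper takes $d=n+1$ uniformly, applying the definition to $Y=\Omega^{i+1}M$, while your choice $d=\max\{n,1\}$ works equally well.
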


\begin{proof}
Suppose that $M$ is nonprojective and set $N=2t+1$. For each $0\leq i<N$, apply the $n$-Igusa--Todorov property to $Y=\Omega^{i+1}M$. This gives a presentation in $\add V$ of $\Omega^nY\simeq\Omega^{n+1+i}M$. Thus \cref{prop:segment} applies with $d=n+1$ and yields $2t+1=N\leq2t$, a contradiction. The case $t=0$ is included, since then $N=1$. Finally, the vanishing in \eqref{eq:ARC} implies the hypotheses of the finite test.
\end{proof}

\begin{cor}\label{cor:selfinjective}
Let $A$ be a self-injective Igusa--Todorov Artin algebra with witness $V$. Every nonprojective module has a nonzero self-extension in some degree between $1$ and $2t(V)+1$. In particular, every self-orthogonal $A$-module is projective.
\end{cor}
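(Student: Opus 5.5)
The plan is to reduce the corollary directly to \cref{thm:finite-test}. The one thing to check is that, over a self-injective algebra, the hypothesis $M\in\perpA$ holds automatically for every module $M$. Since $A$ is self-injective, ${}_AA$ is an injective module. Hence $\Ext_A^i(M,A)=0$ for every $i>0$ and every finitely generated module $M$, so $\perpA=\modA$.

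With this in hand, the first assertion follows by contraposition. Let $M$ be nonprojective, and suppose that $\Ext_A^q(M,M)=0$ for all $1\leq q\leq 2t(V)+1$. Then $M\in\perpA$ together with these vanishings gives the hypotheses of \cref{thm:finite-test} with $t=t(V)$. The theorem then says that $M$ is projective, which is a contradiction. Therefore some degree $q$ with $1\leq q\leq 2t(V)+1$ has $\Ext_A^q(M,M)\neq0$.

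The second assertion is immediate. A self-orthogonal module has $\Ext_A^q(M,M)=0$ in every positive degree, in particular for $1\leq q\leq 2t(V)+1$, so the first assertion forces it to be projective.

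There is no real obstacle. The only point needing care is the identification $\perpA=\modA$, which uses that the regular module is injective (equivalently, that projective and injective modules coincide) for a self-injective Artin algebra. Note that the conclusion is a statement about self-extensions alone: no separate condition on $\Ext_A^{>0}(M,A)$ is needed, precisely because that condition is vacuous here.
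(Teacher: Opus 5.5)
Your proposal is correct and follows exactly the paper's route: over a self-injective algebra ${}_AA$ is injective, so $\perpA=\modA$, and both assertions then follow directly from the finite self-extension test (Theorem~\ref{thm:finite-test}).
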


\begin{proof}
Since $A$ is injective as a left module over itself, $\perpA=\modA$. Apply \cref{thm:finite-test}.
\end{proof}

The estimate for a finite segment also gives a criterion restricted to self-orthogonal modules.

\begin{prop}[A criterion on self-orthogonal modules]\label{prop:orthogonal-IT}
Suppose that there are an integer $n\geq0$ and a module $V$ such that every module $Y$ satisfying $\Ext_A^i(Y,Y\oplus A)=0$ for all $i>0$ admits an exact sequence
\[
0\longrightarrow V_1\longrightarrow V_0\longrightarrow\Omega^nY\longrightarrow0,
\quad V_0,V_1\in\add V.
\]
Then $A$ satisfies ARC.
\end{prop}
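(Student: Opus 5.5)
Let $M$ be a module with $\Ext_A^i(M,M\oplus A)=0$ for every $i>0$, and suppose, aiming for a contradiction, that $M$ is nonprojective. The plan is to apply \cref{prop:segment} with $d=n+1$ and $N=2t(V)+1$. The point is that the hypothesis of the present proposition provides presentations only for self-orthogonal modules in $\perpA$, so each presentation has to be applied to a module that inherits both properties from $M$.

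The natural candidates are the syzygies $Y_i=\Omega^{i+1}M$ for $0\leq i<N$. By \cref{lem:syzygy}(1) we have $Y_i\in\perpA$, that is, $\Ext_A^j(Y_i,A)=0$ for all $j>0$. By \cref{lem:syzygy}(2), $\Ext_A^q(Y_i,Y_i)\simeq\Ext_A^q(M,M)=0$ for every $q\geq1$. Hence $\Ext_A^j(Y_i,Y_i\oplus A)=0$ for all $j>0$, and the hypothesis of the proposition yields an exact sequence
\[
0\longrightarrow K_i\longrightarrow L_i\longrightarrow\Omega^nY_i\longrightarrow0,
\quad K_i,L_i\in\add V.
\]
Since $\Omega^nY_i\simeq\Omega^{n+1+i}M=X_i$ in the notation of \cref{prop:segment} with $d=n+1\geq1$, these are exactly the presentations \eqref{eq:presentation}. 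The syzygies are only defined up to isomorphism, via a choice of projective covers, but minimal projective covers make $\Omega^n\Omega^{i+1}M\simeq\Omega^{n+1+i}M$. Composing the presentation with this isomorphism therefore gives the required sequence.

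Since $M\in\perpA$ is nonprojective and $\Ext_A^q(M,M)=0$ for $1\leq q\leq N$, \cref{prop:segment} applies and yields $2t+1=N\leq2t$, which is a contradiction. Hence $M$ is projective, so $A$ satisfies ARC. The only step needing care is checking that self-orthogonality and membership in $\perpA$ pass to all syzygies, so that the restricted hypothesis actually applies. This is exactly what \cref{lem:syzygy}(1),(2) provide, and no genuine obstacle is expected beyond it.
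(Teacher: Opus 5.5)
Your proposal is correct and follows the paper's proof step for step. You use \cref{lem:syzygy}(1),(2) to show that each $\Omega^{i+1}M$ again satisfies $\Ext_A^{>0}(Y,Y\oplus A)=0$, so the restricted hypothesis supplies presentations of $\Omega^{n+1+i}M$ for $0\leq i\leq 2t$, and \cref{prop:segment} with $d=n+1$ and $N=2t+1$ then gives the contradiction $2t+1\leq 2t$.
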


\begin{proof}
Put $t=t(V)$. If a nonprojective module $M$ satisfied the vanishing in \eqref{eq:ARC}, then every syzygy of $M$ would satisfy the same vanishing by \cref{lem:syzygy}. In particular, the assumed presentation is available for $Y=\Omega^{i+1}M$, $0\leq i\leq2t$. Applying \cref{prop:segment} with $d=n+1$ and $N=2t+1$ gives the contradiction $2t+1\leq2t$.
\end{proof}

\begin{rem}\label{rem:lat-IT}
The finite number of indecomposable isomorphism classes in $\add V$ is essential to both counts in the proof. Bravo, Lanzilotta, Mendoza and Vivero \cite[Definition~5.1]{BLMV2021} introduced Lat-Igusa--Todorov algebras, for which the additional summands may range over a subcategory with infinitely many indecomposable isomorphism classes. The argument therefore does not establish ARC for that larger class. In particular, \cref{cor:selfinjective} retains the Igusa--Todorov hypothesis.
\end{rem}

\section{Applications}\label{sec:applications}

\subsection{Representation dimension at most three}

A module $G$ is a \emph{generator-cogenerator} if every indecomposable projective module and every indecomposable injective module belongs to $\add G$. Auslander \cite{Auslander1971} defined the representation dimension of $A$ as the infimum of $\gldim\End_A(G)^{\mathrm{op}}$ over all generator-cogenerators $G$. The following known consequence will be used.

\begin{prop}\label{prop:repdim}
If $\repdim A\leq3$, then $A$ is $0$-Igusa--Todorov. Consequently, $A$ satisfies ARC.
\end{prop}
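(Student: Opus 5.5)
The plan is to recover Wei's argument \cite[Lemma~2.1]{Wei2008} from Auslander's characterization of representation dimension, and then to invoke \cref{thm:finite-test}.

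First, suppose $\repdim A\leq3$. Then there is a generator-cogenerator $G$ with $\gldim\End_A(G)^{\mathrm{op}}\leq3$. Auslander's correspondence (in the form used by Erdmann--Holm--Iyama--Schr\"oer and by Wei) translates this into a statement about $\add G$-resolutions. For every module $Y$ there is an exact sequence
\[
0\longrightarrow G_1\longrightarrow G_0\longrightarrow Y\longrightarrow0,\quad G_0,G_1\in\add G,
\]
which stays exact under $\Hom_A(G,-)$. The argument runs as follows.
\begin{enumerate}
\item Take a right $\add G$-approximation $G_0\to Y$. It is surjective because $A\in\add G$.
\item Let $K$ be its kernel. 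Applying $\Hom_A(G,-)$ gives a projective presentation of the $\End_A(G)$-module $\Hom_A(G,Y)$, whose second syzygy is $\Hom_A(G,K)$.
\item Since the global dimension of $\End_A(G)^{\mathrm{op}}$ is at most $3$, and $\Hom_A(G,Y)$ already has projective dimension at most $3-2$ after taking the first map into account, $\Hom_A(G,K)$ is a projective $\End_A(G)$-module. More precisely, the standard fact is that $\gldim\leq3$ forces second syzygies of modules of the form $\Hom_A(G,Y)$ to be projective.
\item By projectivization, $K\in\add G$.
\end{enumerate}
This shows that $V=G$ is a $0$-Igusa--Todorov witness ($n=0$, so $\Omega^0Y=Y$). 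For this step I would simply cite \cite[Lemma~2.1]{Wei2008}, as the paper announces.

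Second, once $A$ is $0$-Igusa--Todorov with witness $G$, \cref{thm:finite-test} applies directly. If $M\in\perpA$ and $\Ext_A^q(M,M)=0$ for $1\leq q\leq2t(G)+1$, then $M$ is projective. In particular the vanishing in \eqref{eq:ARC} forces $M$ to be projective, which is ARC.

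The only real content is the first step, namely the identification of $\Hom_A(G,K)$ as projective. The delicate point there is the bookkeeping between left and right modules over $\End_A(G)$ and the exact convention for $\gldim\End_A(G)^{\mathrm{op}}$. Since the paper labels this a known consequence, I would rely on Wei's lemma rather than reprove it.
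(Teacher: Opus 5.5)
Your proposal is essentially the paper's proof: cite \cite[Lemma~2.1]{Wei2008} (with parameter $1$) for a generator-cogenerator $G$ with $\gldim\End_A(G)^{\mathrm{op}}\leq3$ to see that $G$ is a $0$-Igusa--Todorov witness, then apply \cref{thm:finite-test}. Since you rely on that citation the argument is complete, but your optional sketch of Wei's lemma is garbled: $\Hom_A(G,K)$ is the \emph{first} syzygy of $\Hom_A(G,Y)$, not the second, and it is projective because $\pd\Hom_A(G,Y)\leq 3-2=1$, which holds because $G$ is a cogenerator (an injective copresentation $0\to Y\to I^0\to I^1$ with $I^0,I^1\in\add G$ makes $\Hom_A(G,Y)$ the kernel of a map between projective modules, i.e.\ a second syzygy) --- and your sketch never uses this cogenerator input.
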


\begin{proof}
Choose a generator-cogenerator $G$ such that $\gldim\End_A(G)^{\mathrm{op}}\leq3$. Apply the characterization of Wei \cite[Lemma~2.1]{Wei2008} with parameter $1$. It gives an exact sequence
\[
0\longrightarrow G_1\longrightarrow G_0\longrightarrow X\longrightarrow0
\]
for every $X\in\modA$, where $G_0,G_1\in\add G$. Hence $G$ is a $0$-Igusa--Todorov witness. The second assertion follows from \cref{thm:finite-test}.
\end{proof}

\subsection{\texorpdfstring{A criterion using radical powers}{A criterion using radical powers}}

An Artin algebra $C$ has \emph{finite representation type} if $C\text{-}\mathrm{mod}$ has only finitely many isomorphism classes of indecomposable modules. A module $V$ is an \emph{additive generator} of $C\text{-}\mathrm{mod}$ if $C\text{-}\mathrm{mod}=\add V$.

\begin{prop}\label{prop:radical-IT}
Let $J=\rad A$ and let $m\geq1$. Assume that $J^{2m+1}=0$ and $C=A/J^m$ has finite representation type. If $V$ is an additive generator of $C$-\textup{mod}, viewed as an $A$-module, then
\[
W=A\oplus V\oplus\Omega_A V
\]
is a $1$-Igusa--Todorov witness for $A$.
\end{prop}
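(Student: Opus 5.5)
Fix a module $Y$ and write $Z=\Omega Y\subseteq P=P_Y$, so that $Z\subseteq JP$. The plan is to exhibit a short exact sequence $0\to W_1\to W_0\to Z\to0$ with $W_0,W_1\in\add W$. The key observation is that $J^mZ\subseteq J^{m+1}P$, and $J^{m+1}P$ is annihilated by $J^m$ because $J^{2m+1}=0$. Hence $J^mZ$ is an $A/J^m$-module and therefore lies in $\add V$. Likewise $Z/J^mZ$ is annihilated by $J^m$, so it also lies in $\add V$. In this way $Z$ is an extension of two modules in $\add V$:
\[
0\to J^mZ\to Z\to Z/J^mZ\to0 .
\]

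To turn this extension into a presentation, take $\bar Z=Z/J^mZ\in\add V$ and its projective cover $0\to\Omega\bar Z\to P_{\bar Z}\to\bar Z\to0$. Note that $\Omega\bar Z\in\add\Omega_AV$, since syzygies of summands of $V^k$ are summands of $(\Omega_A V)^k$ up to projectives, and $A$ is included in $W$. Lift $P_{\bar Z}\to\bar Z$ to a map $P_{\bar Z}\to Z$, and add the inclusion $J^mZ\to Z$. This gives an epimorphism
\[
W_0:=P_{\bar Z}\oplus J^mZ\twoheadrightarrow Z,
\]
and $W_0\in\add W$.

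Its kernel $W_1$ sits, by the snake lemma applied to the map of short exact sequences from $0\to J^mZ\to W_0\to P_{\bar Z}\to0$ onto $0\to J^mZ\to Z\to\bar Z\to 0$, in a sequence showing $W_1\simeq\Omega\bar Z$. More concretely, the kernel is
\[
\{(x,-\phi(x)) : x\in P_{\bar Z},\ \phi(x)\in J^mZ\},
\]
which projects isomorphically onto $\ker(P_{\bar Z}\to\bar Z)=\Omega\bar Z$. Therefore $W_1\simeq\Omega\bar Z\in\add(A\oplus\Omega_AV)\subseteq\add W$. This yields $0\to W_1\to W_0\to\Omega Y\to0$, so $W$ is a $1$-Igusa--Todorov witness.

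The main care needed is in two places. The first is checking that $J^mZ$ is annihilated by $J^m$; this uses $Z\subseteq JP$ (the projective cover) and $J^{2m+1}=0$, and is the only place where the exponent $2m+1$ enters. The second is the kernel identification. The latter is routine linear algebra, since the map $J^mZ\to Z$ is injective, so the kernel injects into $P_{\bar Z}$ with image exactly the preimage of $0$ in $\bar Z$. Finally, one must remark that the projective cover over $A$ of the $A/J^m$-module $\bar Z$ is used, so that $\Omega\bar Z$ is genuinely in $\add\Omega_AV$ after discarding projective summands.
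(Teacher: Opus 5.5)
Your proof is correct and is essentially the paper's argument. The paper forms the pullback $E$ of $P_Q\twoheadrightarrow Q$ along $X\twoheadrightarrow X/J^mX$, notes that $0\to J^mX\to E\to P_Q\to0$ splits, and obtains $0\to\Omega_AQ\to J^mX\oplus P_Q\to X\to0$; this is exactly your explicit map $(\phi,\iota)$ and your kernel computation. One minor point: the qualifier ``up to projectives'' is unnecessary, because with projective covers $\Omega_A$ of a summand of $V^k$ is an honest summand of $(\Omega_AV)^k$.
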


\begin{proof}
Let $N\in\modA$ and set $X=\Omega_A N$. If $P\twoheadrightarrow N$ is its projective cover, then $X\subseteq JP$. Consequently,
\[
J^{2m}X\subseteq J^{2m+1}P=0.
\]
Put $U=J^mX$ and $Q=X/U$. Both $U$ and $Q$ are annihilated by $J^m$, so they are $C$-modules and hence belong to $\add V$.

Form the pullback of a projective cover $P_Q\twoheadrightarrow Q$ along $X\twoheadrightarrow Q$. Its middle module $E$ fits into exact sequences
\[
0\longrightarrow\Omega_AQ\longrightarrow E\longrightarrow X\longrightarrow0,
\quad
0\longrightarrow U\longrightarrow E\longrightarrow P_Q\longrightarrow0.
\]
The second sequence splits, so the first becomes
\[
0\longrightarrow\Omega_AQ\longrightarrow U\oplus P_Q
\longrightarrow\Omega_A N\longrightarrow0.
\]
Since $Q\in\add V$, there is a module $Q'$ and an integer $r\geq1$ such that $Q\oplus Q'\simeq V^r$. Taking projective covers gives
\[
\Omega_AQ\oplus\Omega_AQ'\simeq\Omega_A(V^r)\simeq(\Omega_AV)^r.
\]
Thus $\Omega_AQ\in\add(\Omega_AV)$. The two left terms therefore lie in $\add W$, as required.
\end{proof}

\begin{cor}\label{cor:radical}
Under the hypotheses of \cref{prop:radical-IT}, let $t=t(A\oplus V\oplus\Omega_AV)$. If $M\in\perpA$ and $\Ext_A^q(M,M)=0$ for $1\leq q\leq2t+1$, then $M$ is projective. In particular, $A$ satisfies ARC.
\end{cor}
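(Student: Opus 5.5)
This corollary should follow by combining \cref{prop:radical-IT} with \cref{thm:finite-test}. First, \cref{prop:radical-IT} shows that $W=A\oplus V\oplus\Omega_AV$ is a $1$-Igusa--Todorov witness for $A$, so $A$ is a $1$-Igusa--Todorov Artin algebra. Next, I apply \cref{thm:finite-test} with $n=1$ and witness $W$. Since $t=t(W)$ by the definition in the corollary, the hypotheses $M\in\perpA$ and $\Ext_A^q(M,M)=0$ for $1\leq q\leq 2t+1$ are exactly those of the theorem. Hence $M$ is projective.

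For the ARC statement, suppose $\Ext_A^i(M,M\oplus A)=0$ for every $i>0$. Then $M\in\perpA$, and $M$ has vanishing self-extensions in all positive degrees, in particular for $1\leq q\leq 2t+1$. The first part then shows that $M$ is projective. Alternatively, this is the final sentence of \cref{thm:finite-test}.

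Two small points need checking. First, $t$ must be finite. This holds because $V$ is a finitely generated additive generator of $C$-mod, which exists since $C$ has finite representation type, and $\Omega_AV$ is finitely generated. Second, \cref{prop:radical-IT} uses no extra hypotheses beyond those assumed here. There is no real obstacle, since all the substantive work is done in the two cited results.

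For the radical cube zero case mentioned in Theorem~III, take $m=1$. Then $A/J$ is semisimple and hence of finite representation type, so the corollary applies with $V$ equal to the direct sum of the simple modules.
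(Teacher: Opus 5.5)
Your proposal is correct and follows the paper's own proof: \cref{prop:radical-IT} provides the $1$-Igusa--Todorov witness $W=A\oplus V\oplus\Omega_AV$, and \cref{thm:finite-test} is then applied with $n=1$ and $t=t(W)$. Your side remarks on the finiteness of $t$, the ARC deduction and the case $m=1$ are accurate; the paper's proof does not spell them out.
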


\begin{proof}
Apply \cref{thm:finite-test} to the witness in \cref{prop:radical-IT}.
\end{proof}

\begin{cor}\label{cor:cube-zero}
Every Artin algebra with radical cube zero satisfies ARC.
\end{cor}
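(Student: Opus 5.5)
The corollary should follow by specializing \cref{prop:radical-IT} and \cref{cor:radical} to the case $m=1$. Suppose $J^3=0$, so that $J^{2m+1}=0$ with $m=1$. The quotient $C=A/J$ is semisimple, since it is an Artin algebra whose radical is zero. Its indecomposable modules are therefore exactly the simple $A/J$-modules, and there are finitely many isomorphism classes of these. Hence $C$ has finite representation type, with additive generator $V=A/J$, the direct sum of the simple modules viewed as $A$-modules.

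With this choice, \cref{prop:radical-IT} says that $W=A\oplus A/J\oplus\Omega_A(A/J)=A\oplus A/J\oplus J$ is a $1$-Igusa--Todorov witness. Here we use that the projective cover of $A/J$ is $A\to A/J$, so $\Omega_A(A/J)=J$. \cref{cor:radical}, or directly \cref{thm:finite-test}, then gives ARC for $A$. As a byproduct we obtain the finite test with $t=t(A\oplus A/J\oplus J)$.

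The only point that needs care is the degenerate case $J=0$. There $A$ is semisimple, every module is projective, and ARC holds trivially. The argument above also covers this case without change, since then $J^{3}=0$ holds and $C=A$. Apart from this, the step requiring a check is that $J^3=0$ indeed matches the hypothesis $J^{2m+1}=0$ at $m=1$. No step is a real obstacle; all the homological work was done in \cref{prop:segment}.
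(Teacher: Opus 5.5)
Your proposal is correct and follows the paper's proof: set $m=1$ in \cref{cor:radical} and note that $A/J$ is semisimple, hence of finite representation type. Your explicit witness $A\oplus A/J\oplus J$, obtained from $\Omega_A(A/J)=J$, agrees with the paper's remark after the corollary about taking $V$ to be the sum of the simple modules.
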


\begin{proof}
Take $m=1$ in \cref{cor:radical}. The algebra $A/J$ is semisimple and therefore has finite representation type.
\end{proof}

These results prove Theorems~II and III. For $J^3=0$, one may take $V$ to be the direct sum of representatives of the simple modules. The support of the witness $A\oplus V\oplus\Omega_AV$ also involves the radicals of indecomposable projective modules. This explains why $t(V\oplus\Omega_AV)$ differs from the parameter counting simple modules in the work of Zhang and Zhou \cite{ZZ2026}.

\begin{rem}
The generalized Nakayama conjecture and ARC are equivalent as assertions over all Artin algebras. This does not give a pointwise implication from the former to the latter, as explained by Chen, Hu, Qin and Wang \cite[Introduction]{CHQW2023}. The application concerning radical powers instead follows directly from the explicit Igusa--Todorov witness.
\end{rem}

\noindent \textbf{Disclosure of computational assistance.}\quad Chatgpt was used for language editing and improving the clarity of the manuscript. All mathematical results, arguments, and proofs were developed and verified by the authors.

\noindent\textbf{Acknowledgements.}\quad Xiaojin Zhang is supported by the National Natural Science Foundation of China (Grant Nos. 12171207 and 12371038). Panyue Zhou is supported by the National Natural Science Foundation of China (Grant No.~12371034).

\vspace{3mm}
\noindent\textbf{Data Availability}\hspace{2mm}
Data sharing is not applicable to this article, as no datasets were generated or analysed during the current study.

\vspace{3mm}
\noindent\textbf{Conflict of Interests}\hspace{2mm}
The authors declare that they have no conflicts of interest.

\medskip
{\footnotesize
\noindent {\bf Xiaojin Zhang}\\
School of Mathematics and Statistics, Jiangsu Normal University,\\
Xuzhou 221116, Jiangsu, P.~R.~China\\
E-mail: xjzhang@jsnu.edu.cn

\vspace{5mm}
\noindent {\bf Panyue Zhou}\\
School of Mathematics and Statistics, Changsha University of Science and Technology,\\
Changsha 410114, Hunan, P.~R.~China\\
E-mail: panyuezhou@163.com.
\par}

\end{document}